\def\thtext#1{
\catcode`@=11
\gdef\@thmcountersep{. #1}
\catcode`@=12
}
\def\threst{
\catcode`@=11
\gdef\@thmcountersep{.}
\catcode`@=12
}
\theoremstyle{plain}
\newtheorem{thm}{ Theorem }[section]
\newtheorem{cor}[thm]{ Corollary }
\newtheorem{lem}[thm]{ Lemma }
\theoremstyle{definition}
\newtheorem{rk}[thm]{ Note }
\def\.{.\spacefactor\@m}
\def\R{\mathbb R}
\def\Z{{\mathbb Z}}
\def\a{\alpha}
\def\e{\varepsilon}
\def\dl{\delta}
\def\D{\Delta}
\def\g{\gamma}
\def\l{\lambda}
\def\0{\emptyset}
\def\:{\colon}
\def\<{\langle}
\def\>{\rangle}
\def\[{\llbracket}
\def\]{\rrbracket}
\def\d{\partial}
\def\rom#1{\emph{#1}}
\def\({\rom(}
\def\){\rom)}
\def\sm{\setminus}
\def\ss{\subset}
\def\sp{\supset}
\def\x{\times}
\def\bX{{\bar X}}
\def\bY{{\bar Y}}
\def\diam{\operatorname{diam}}
\def\dis{\operatorname{dis}}
\def\GH{\operatorname{\mathcal{G\!H}}}
\def\Int{\operatorname{Int}}
\def\RP{\operatorname{\RP}}
\def\cC{{\cal C}}
\def\cE{{\cal E}}
\def\cG{{\cal G}}
\def\cL{{\cal L}}
\def\cR{{\cal R}}
\begin{document}
\title{Gromov--Hausdorff Geometry of Metric Trees}
\author{A.\,O.~Ivanov, I.\,N.~Mikhailov, A.\,A.~Tuzhilin}
\date{}
\maketitle

\begin{abstract}
In this paper, we study metric trees, without any finiteness restrictions. For subsets of such trees, a condition that guarantees that the Hausdorff and Gromov--Hausdorff distances from the subset to the entire metric tree are the same is obtained. This result allows to construct a new class of shortest geodesics (in the proper class of all metric spaces) connecting such subset of a metric tree with the tree itself. In particular, the technique elaborated is demonstrated on subsets of the real line.

\textbf{Keywords\/}: metric space, ultrametric spaces, ultrametrization, Hausdorff distance, Gromov--Hausdorff distance
Gromov--Hausdorff class, clouds
\end{abstract}

\setlength{\epigraphrule}{0pt}

\section{Introduction}
\markright{\thesection.~Introduction }
This paper is devoted to geometry of the Gromov--Hausdorff distance in the case of a metric tree cloud, i.e., a proper class of metric spaces at a finite Gromov--Hausdorff distance from a fixed metric tree (that need not be finite). As a particular case the cloud of the the real line is considered.

The \emph{Gromov--Hausdorff distance\/} measures a degree of dissimilarity of metric spaces and is defined as the half of the infimum of distortions of all possible correspondences (i.e., multivalued surjective mappings) between metric spaces. The distortion of a correspondence is the supremum of the absolute differences of distances between point pairs of one  space and a pair of their images in the other space. Evidently, the Gromov--Hausdorff distance is zero between isometric spaces, although it can be equal to zero between non-isometric ones too.  Moreover, the Gromov-Hausdorff distance can also be equal to infinity, for example, between a bounded and an unbounded spaces. The details concerning the Gromov--Hausdorff distance can be found in~\cite{BurBurIva}.

The family of all metric spaces forms a proper class in the sense of the von Neumann--Bernays--G\"odel set theory. Metric spaces considering up to isometries, form a proper class too, which we denote by $\GH$ and call the \emph{Gromov--Hausdorff class}. However, even after such factorization, there remain pairs of different elements from $\GH$, the distance between which is zero. Even if one identifies the metric spaces at zero distance, the result is a proper class as well, and we denote it by $\GH_0$. On this class the Gromov--Hausdorff distance is positively defined. Finally, we partition $\GH_0$ into maximal subclasses consisting of spaces on finite distance. We call these subclasses \emph{clouds}, and thus, the Gromov--Hausdorff distance is a metric on each such cloud. If $X$ is a metric space, then we denote by $[X]$ the unique cloud containing $X$. In this paper, we will be especially interested in properties of the cloud $[\R]$ of the real line $\R$ and, more generally, of the ones containing metric trees.

When studying clouds, the mapping $\GH_0\x\R_{>0}\to\GH_0$, which takes a metric space $X$ and a positive real number $\l$ to the metric space $\l X$ obtained from $X$ by multiplying all distances in $X$ by $\l$. It turns out that this mapping, being a contraction on the cloud of bounded metric spaces, in general, possesses several unexpected properties. S.~Bogaty and A.~Tuzhilin~\cite{BogatyTuzhilin} has found that the distance between spaces $X$ and $\l X$ can be infinite, i.e., multiplication by $\l$ could take a metric space $X$ to another cloud. But in the case of the cloud $[\R]$ each such multiplication maps the cloud into itself. However, as I.~Mikhailov shown~\cite{MikhailovZ}, multiplication by $\l$ is not continuous in general: if we multiply integers $\Z\in[\R]$ by any $\l>1$, then the Gromov--Hausdorff distance between $\Z$ and the resulting space $\l\Z$ is at least $1/2$. Thus, even in this case the multiplication by $\l$ is not a contraction.

However, as it turns out, there is another way to ``transform'' some elements of this cloud to $\R$. Namely, here we show that for each $\e$-networks $S$ in $\R$, the canonical Hausdorff geodesic connecting $S$ and $\R$ (see below for definitions) is, in fact, a geodesic in $[\R]$. This construction gives a hope for constructing a contraction that differs from the multiplication by $\l$. In proving this fact, we obtained another remarkable result: the distances between the real line and its subset, calculated in Hausdorff and Gromov-Hausdorff metrics, coincide with each other. Notice that the specific values of the Gromov--Hausdorff  distance, in contrast to the Hausdorff distances, is a great rarity. Also notice that the latter result is proved using an interesting lower bound for the Gromov-Hausdorff distance, found in~\cite{UltraMemoli} by means of the so-called ultrametrization, see below.

Just before the publication of this paper, the paper~\cite{MetricTrees} has appeared. It is shown in~\cite{MetricTrees} that for an arbitrary subset $X$ of a finite metric tree $T$, if the Hausdorff distance $d_H(X,\,T)$ is greater than the oriented Hausdorff distance from the boundary of the tree $T$ to $X$, then the Gromov--Hausdorff distance between $X$ and $T$ equals $d_H(X,\,T)$. We have observed that the technique of the proof that is based on Lemma~2.3 from~\cite{MetricTrees} is also closely related to the ultrametrization mapping, although this mapping is not used explicitly. In this paper, we, in particular, generalize this result from~\cite{MetricTrees} to equality of Hausdorff and Gromov--Hausdorff distances for infinite metric trees subsets.

\section{Preliminaries}\label{sec:GH}
\markright{\thesection.~Preliminaries}
Let us recall the necessary concepts and results concerning Hausdorff and Gromov--Hausdorff distances. More detailed information can be found in~\cite{BurBurIva}.

Let $X$ be a metric space and $x$ and $y$ be its points, then $|xy|$ will denote the distance between these points, and if $A$ and $B$ are non-empty subsets of $X$, then we put $|AB|=|BA|=\inf\bigl\{|ab|:a\in A,\,b\in B\bigr\}$. If $A=\{a\}$, then instead of $\bigl|\{a\}B\bigr|=\bigl|B\{a\}\bigr|$ we will write $|aB|=|Ba|$. For non-negative real $r$ we define \emph{closed $r$-neighborhood of the set $A$} as follows:
$$
B_r(A)=\{x\in X:|xA|\le r\}.
$$
Let $X$ be a metric space and $A$, $B$ be non-empty subsets of $X$. The value
$$
d_H(A,B)=\max\bigl\{\sup_{a\in A}|aB|,\,\sup_{b\in B}|Ab|\bigr\}
$$
is called the \emph{Hausdorff distance between $A$ and $B$}. Equivalently,
$$
d_H(A,B)=\inf\bigl\{r:A\ss B_r(B)\ \ \text{and}\ \ B_r(A)\sp B\bigr\}.
$$
\emph{The oriented Hausdorff distance} from $A$ to $B$ is called the value
$$
\overrightarrow{d_H}(A,\,B)=\sup_{a\in A}|aB|.
$$
We extend $\overrightarrow{d_H}(A,\,B)$ for empty $A$ as $\overrightarrow{d_H}(\0,\,B)=0$.

It is well known that $d_H$ is a generalized metric on the set $\cC(X)$ of all non-empty closed subsets of a metric space $X$. Here the word ``generalized'' means that $d_H$ can be equal to infinity on some pairs of subsets (for example, on a bounded and unbounded ones).

Next, let $X$ and $Y$ be non-empty metric spaces. A multivalued surjective mapping $R$ from $X$ to $Y$ is called \emph{correspondence between $X$ and $Y$}. The set of all correspondences between $X$ and $Y$ are denoted by $\cR(X,Y)$. For any correspondence $R\in\cR(X,Y)$, we define its \emph{distortion\/} as
$$
\dis R=\sup\Bigl\{\bigl||xx'|-|yy'|\bigr|:(x,y),\,(x',y')\in R\Bigr\}.
$$
\emph{The Gromov--Hausdorff distance\/} between $X$ and $Y$ is
$$
d_{GH}(X,Y)=\frac12\inf\bigl\{\dis R:R\in\cR(X,Y)\bigr\}.
$$
Note~\cite{BurBurIva} that this value is also equal to the infimum of the Hausdorff distances between the images of the spaces $X$ and $Y$ while their isometric embeddings into all possible metric spaces.

For a metric space $X$, we denote $\diam X$ by its \emph{diameter\/}:
$$
\diam X=\sup\bigl\{|xy|:x,y\in X\bigr\}.
$$
For a single-point metric space we reserve the notation $\D_1$. It is well known~\cite{BurBurIva} that $2d_{GH}(\D_1,X)=\diam X$.

\section{Subsets of Euclidean space at a finite Gromov--Hausdorff distance}
\markright{\thesection.~Subsets of Euclidean space at a finite Gromov--Hausdorff distance}

In the paper~\cite{TuzhilinMikhailov} a natural question was posed: to describe all subsets of a given finite-dimensional normed space at a finite Gromov--Hausdorff distance from it. We will need the main result of this work.

\begin{thm}[\cite{TuzhilinMikhailov}]\label{thm:SUbsetRnFiniteGH}
A subset $X$ of a finite-dimensional Euclidean space $\R^n$ is at a finite Gromov--Hausdorff distance from $\R^n$ if and only if $X$ is an $\e$-network for some finite $\e$. The latter is equivalent to the fact that the Hausdorff distance between $X$ and $\R^n$ is finite.
\end{thm}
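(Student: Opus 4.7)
The plan is to treat the chain of equivalences by disposing of the routine parts first and then focusing on the one genuine implication, which requires rigidity of the Euclidean metric. For the easy side, observe that because $X\ss\R^n$ we have $\overrightarrow{d_H}(X,\R^n)=0$, so $d_H(X,\R^n)=\sup_{y\in\R^n}|yX|$; this supremum is finite exactly when there is an $\e$ with $|yX|\le\e$ for every $y\in\R^n$, which is by definition the statement that $X$ is an $\e$-network. Moreover, since $X$ and $\R^n$ are already isometrically embedded in the common ambient space $\R^n$, the general relation $d_{GH}\le d_H$ from Section~\ref{sec:GH} gives $d_{GH}(X,\R^n)\le d_H(X,\R^n)$, so finite Hausdorff distance automatically yields finite Gromov--Hausdorff distance.

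The substance is the reverse implication $d_{GH}(X,\R^n)<\infty\imply d_H(X,\R^n)<\infty$. I would pick a correspondence $R\in\cR(X,\R^n)$ with $\dis R<2d_{GH}(X,\R^n)+1=:D$ and, using the axiom of choice, select $\v\:\R^n\to X$ with $(\v(y),y)\in R$ for every $y$. The distortion bound then translates into
$$
\bigl||\v(y)\v(y')|-|yy'|\bigr|\le D\quad\text{for all } y,y'\in\R^n,
$$
so $\v$ is a global $D$-coarse isometry of $\R^n$ into itself. The goal is then to show $\sup_{z\in\R^n}|zX|\le c_nD$ for some constant $c_n$ depending only on the dimension.

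The key step is the Hyers--Ulam stability of Euclidean isometries: any $D$-coarse isometry $\v\:\R^n\to\R^n$ lies within distance $c_nD$ of an honest rigid motion $T$ of $\R^n$. One can establish this in place by fixing $n+1$ affinely independent anchor points $y_0,\dots,y_n\in\R^n$, noting that the image points $\v(y_0),\dots,\v(y_n)$ have pairwise distances $D$-close to $|y_iy_j|$, and hence coincide up to error $O(D)$ with the vertices of some rigid image of the anchor simplex; this pins down a canonical rigid motion $T$. For a generic $z\in\R^n$, the $n+1$ distances $|\v(z)\v(y_i)|$ are each $D$-close to $|zy_i|$; because a point in $\R^n$ is determined (up to a reflection killed by orienting the anchor simplex) by its distances to an affinely independent $(n+1)$-tuple, and that dependence is Lipschitz via the Cayley--Menger formula, this forces $|\v(z)-T(z)|\le c_nD$. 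With such a $T$ in hand the conclusion is immediate: for any $z\in\R^n$, set $y=T^{-1}(z)$, so that $\v(y)\in X$ and $|\v(y)-z|=|\v(y)-T(y)|\le c_nD$, hence $|zX|\le c_nD$ and $X$ is a $c_nD$-network.

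The main obstacle is the stability step, i.e.\ making the passage from the coarse isometry $\v$ to a genuine rigid motion $T$ quantitative with constants independent of $X$; everything else amounts to unpacking definitions. A subtlety to watch is that $\v$ is only defined into $X$, not onto $\R^n$, so it is essential that the Hyers--Ulam conclusion produces a rigid motion $T$ of the \emph{target} $\R^n$ (not merely of the image of $\v$) in order to invert $T$ in the final line of the argument.
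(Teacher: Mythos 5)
The paper itself does not prove Theorem~\ref{thm:SUbsetRnFiniteGH}: it is imported verbatim from~\cite{TuzhilinMikhailov}, so there is no in-text argument to compare yours against. Your reduction is correctly organized --- the equivalence of ``$\e$-network'' with $d_H(X,\R^n)<\infty$ and the implication $d_H<\infty\imply d_{GH}<\infty$ are indeed routine, and extracting from a finite-distortion correspondence a selection $\v\:\R^n\to X$ satisfying $\bigl||\v(y)\v(y')|-|yy'|\bigr|\le D$ is the right first move. The problem is the step you yourself identify as the key one.

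First, the trilateration argument is quantitatively wrong. With a \emph{fixed} anchor simplex $y_0,\dots,y_n$ of bounded diameter, the map sending $z$ to its distance vector $\bigl(|zy_0|,\dots,|zy_n|\bigr)$ is injective, but its inverse is not Lipschitz with a constant independent of $z$: the error in recovering $z$ from $D$-perturbed distances grows without bound as $z$ recedes from the anchors. Concretely, in $\R^2$ with anchors $(0,0)$, $(1,0)$, $(0,1)$, the points $z=(R,0)$ and $z'=(R\cos\theta,R\sin\theta)$ with $\theta$ of order $\sqrt{D/R}$ have all three anchor distances agreeing to within $O(D)$, yet $|zz'|$ is of order $\sqrt{DR}\to\infty$. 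So $D$-closeness of the distances $|\v(z)\v(y_i)|$ to $|zy_i|$ does not force $|\v(z)-T(z)|\le c_nD$; the bound you actually obtain degrades with $|z|$. Second, the stability theorem you want to invoke cannot simply be cited: the classical Hyers--Ulam-type results (Hyers--Ulam, Gruber, Omladic--Semrl) require the coarse isometry to be surjective, and for non-surjective coarse isometries stability genuinely fails when the target has room to spare --- e.g.\ $t\mapsto\bigl(t,\sqrt{|t|}\bigr)$ is a $\tfrac12$-coarse isometry $\R\to\R^2$ at unbounded distance from every affine isometry. In your situation the image of $\v$ lies in $X$, so ``$\v$ is coarsely surjective'' is essentially the conclusion $d_H(X,\R^n)<\infty$ you are trying to reach; assuming stability for not-necessarily-surjective coarse self-maps of $\R^n$ therefore either begs the question or requires a substantial independent argument --- which is, in effect, the actual content of~\cite{TuzhilinMikhailov}. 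A repair would need either a multi-scale version of the anchor argument (anchors at scale comparable to $|z|$, with the resulting rigid motions matched across scales) or a different mechanism, such as a volume or coarse-degree argument, showing that a coarse isometry $\R^n\to\R^n$ has coarsely dense image.
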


In fact, the statement of Theorem \ref{thm:SUbsetRnFiniteGH} can be reformulate as follows: for an arbitrary $A\ss\R^n$ the conditions $d_{GH}(\R^n,\,A)=\infty$ and $d_H(\R^n,\,A)=\infty$ are equivalent. Thus, the question arises whether Hausdorff and Gromov--Hausdorff distances from $\R^n$ to its arbitrary subsets of $A$ are the same in the case where both of these quantities are finite. Below we will prove this statement in the case $n=1$.

\section{Ultrametrization}
\markright{\thesection.~Ultrametrization}
Let $X$ be an arbitrary metric space. The finite the sequence $L=\{x=x_0,x_1,\ldots,x_n=y\}$ of points from $X$ will be called \emph{a dotted line connecting $x$ and $y$}. We put $|L|_u:=\max_{i=1}^n\bigl\{|x_{i-1}x_i|\bigr\}$ and we call this value \emph{the ultrametric length of $L$}. The set of all dotted lines in $X$, connecting $x$ and $y$, we denote by $\cL_{x,y}$.

Following~\cite{UltraMemoli}, for each pair $x,y\in X$ we put
$$
|xy|_u=\inf\bigl\{|L|_u:L\in\cL_{x,y}\bigr\}.
$$
As shown in~\cite{UltraMemoli}, the function $|\cdot\cdot|_u$ on $X\x X$ is a pseudometric. Let $U(X)$ denote the quotient space obtained from $X$ by factorization by zeros of the pseudometric $|\cdot\cdot|_u$. The metric space $U(X)$, as well as the natural projection $\pi\:X\to U(X)$ are called \emph{ultrametrizations}.

The following inequality was first formulated in the work \cite{UltraMemoli} for finite metric spaces and subsequently generalized to the case of bounded~\cite{LMZ} and arbitrary~\cite{MikhailovUltra} metric spaces.

\begin{thm}[\cite{UltraMemoli},\cite{LMZ},\cite{MikhailovUltra}]\label{thm:UltraDownEst}
For any non-empty metric spaces $X$ and $Y$, we have the following inequality\/\rom:
$$
d_{GH}(X,Y)\ge d_{GH}\bigl(U(X),U(Y)\bigr).
$$
\end{thm}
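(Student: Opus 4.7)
The strategy is to start with an arbitrary correspondence $R\in\cR(X,Y)$, push it forward along the canonical projections $\pi_X\:X\to U(X)$ and $\pi_Y\:Y\to U(Y)$ to obtain a correspondence $R_u\in\cR\bigl(U(X),U(Y)\bigr)$, and verify that $\dis R_u\le\dis R$. Taking infima over $R$ then yields the desired inequality $d_{GH}(U(X),U(Y))\le d_{GH}(X,Y)$.

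Concretely, set $R_u:=\bigl\{\bigl(\pi_X(x),\pi_Y(y)\bigr):(x,y)\in R\bigr\}$; surjectivity of $\pi_X$, $\pi_Y$, and $R$ immediately forces $R_u$ to be a correspondence between $U(X)$ and $U(Y)$. To estimate its distortion, fix two pairs $(x,y),(x',y')\in R$ together with an arbitrary dotted line $L=\{x=x_0,x_1,\ldots,x_n=x'\}\in\cL_{x,x'}$. Using the surjectivity of $R$, choose $y_i\in Y$ with $(x_i,y_i)\in R$ for $1\le i\le n-1$, set $y_0=y$ and $y_n=y'$, and form the dotted line $L'=\{y_0,\ldots,y_n\}\in\cL_{y,y'}$. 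The standard distortion bound $\bigl||x_{i-1}x_i|-|y_{i-1}y_i|\bigr|\le\dis R$ applied termwise then gives
$$
|L'|_u=\max_i|y_{i-1}y_i|\le\max_i|x_{i-1}x_i|+\dis R=|L|_u+\dis R.
$$
Taking the infimum over $L\in\cL_{x,x'}$, and then swapping the roles of $X$ and $Y$, yields $\bigl||xx'|_u-|yy'|_u\bigr|\le\dis R$. Since the metric on $U(X)$ (respectively $U(Y)$) is by construction descended from the pseudometric $|\cdot\cdot|_u$, this is exactly the distortion estimate for $R_u$, so $\dis R_u\le\dis R$ and the theorem follows.

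I expect the only subtlety to be purely bookkeeping: one has to observe that the bound $\bigl||xx'|_u-|yy'|_u\bigr|$ depends only on the equivalence classes $\pi_X(x),\pi_X(x'),\pi_Y(y),\pi_Y(y')$, which is automatic because pseudometric distances descend to the quotient. The case $\dis R=+\infty$ is vacuous, and $|xx'|_u$ is always finite since the two-term dotted line $\{x,x'\}$ gives $|xx'|_u\le|xx'|<\infty$, so no delicate arithmetic with $\infty$ ever intervenes. Everything else is an infimum comparison of ultrametric lengths of dotted lines and uses no finiteness, boundedness, or compactness of $X$ or $Y$; this is precisely why the same argument extends uniformly from the finite setting of~\cite{UltraMemoli} to the bounded~\cite{LMZ} and arbitrary~\cite{MikhailovUltra} cases.
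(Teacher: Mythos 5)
Your argument is correct and complete: pushing a correspondence forward along the ultrametrization projections, lifting a dotted line in $X$ to one in $Y$ via $R$, and passing to the infimum gives exactly $\dis R_u\le\dis R$, which yields the inequality. The paper itself states this theorem with citations and gives no proof, but your argument is precisely the standard one used in the cited references, and it indeed requires no finiteness or boundedness assumptions.
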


\begin{cor}\label{thm:UltraDownZero}
If metric spaces $X$ and $Y$ are such that $d_{GH}(X,Y)=0$, then $d_{GH}\bigl(U(X),U(Y)\bigr)=0$. In particular, this is true for non-empty subsets $X$ and $Y$ of a metric space $Z$, for which $d_H(X,Y)=0$, where the latter is equivalent to the coincidence of the closures $\bX$ and $\bY$.
\end{cor}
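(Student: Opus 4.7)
The plan is to derive the corollary directly from Theorem~\ref{thm:UltraDownEst}, with only a short bookkeeping step to cover the ``in particular'' clause. The bulk of the work has already been done in the theorem, so the proof should be a three-sentence sequence rather than a new argument.

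First, for the main implication I would simply instantiate the inequality of Theorem~\ref{thm:UltraDownEst} at the given spaces $X$ and $Y$: since
$$
0\le d_{GH}\bigl(U(X),U(Y)\bigr)\le d_{GH}(X,Y)=0,
$$
both outer values coincide, which forces $d_{GH}\bigl(U(X),U(Y)\bigr)=0$. No property of the ultrametrization $\pi$ beyond what Theorem~\ref{thm:UltraDownEst} encapsulates is needed here.

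For the ``in particular'' clause I would reduce to the case already handled. Recall the standard comparison $d_{GH}(X,Y)\le d_H(X,Y)$ valid for any two non-empty subsets $X,Y$ of a common metric space $Z$ (it follows at once from the definition of $d_{GH}$ as an infimum over isometric embeddings, using the tautological embedding into $Z$). If $d_H(X,Y)=0$ then $d_{GH}(X,Y)=0$, and the first part of the corollary applies. Finally, the equivalence of $d_H(X,Y)=0$ with $\bX=\bY$ is the well-known fact that $d_H$ descends to a genuine metric on $\cC(Z)$ together with the identity $d_H(X,Y)=d_H(\bX,\bY)$ for any $X,Y\ss Z$; both of these were recalled in Section~\ref{sec:GH}.

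There is no real obstacle to overcome: the content lies entirely in Theorem~\ref{thm:UltraDownEst}, and the corollary is a direct consequence. The only minor care is to cite the subset comparison $d_{GH}\le d_H$ and the standard characterization of vanishing Hausdorff distance by coincidence of closures, both of which are classical and can be taken from~\cite{BurBurIva}.
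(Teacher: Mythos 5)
Your proposal is correct and follows exactly the route the paper intends: the main claim is the inequality of Theorem~\ref{thm:UltraDownEst} specialized to $d_{GH}(X,Y)=0$, and the ``in particular'' clause reduces to it via $d_{GH}\le d_H$ together with the standard characterization of $d_H(X,Y)=0$ by $\bX=\bY$. Nothing is missing.
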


Let's apply the triangle inequality and the corollary~\ref{thm:UltraDownZero}.

\begin{cor}\label{cor:DiamsUltra}
Let $X$, $Y$, and $Z$ be nonempty metric spaces and $d_{GH}(X,Y)=0$, or $X$ and $Y$ are non-empty subsets of the metric spaces $W$ such that $d_H(X,Y)=0$ \(in other words, the closures of $X$ and $Y$ coincide\/\). Then $d_{GH}(Z,X)=d_{GH}(Z,Y)$ and $d_{GH}\bigl(Z,U(X)\bigr)=d_{GH}\bigl(Z,U(Y)\bigr)$. In particular, this holds for $Z=\D_1$, so $\diam X=\diam Y$ and $\diam U(X)=\diam U(Y)$.
\end{cor}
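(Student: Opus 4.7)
The plan is to reduce everything to the triangle inequality for the Gromov--Hausdorff distance, using Corollary~\ref{thm:UltraDownZero} to pass to the ultrametrizations.

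First I would handle the two hypotheses uniformly by noting they both give $d_{GH}(X,Y)=0$. In the first case this is the hypothesis. In the second case, $X,Y\ss W$ with $d_H(X,Y)=0$: since the Gromov--Hausdorff distance is bounded above by the Hausdorff distance of any isometric embedding into a common ambient space (this is the standard alternative definition of $d_{GH}$ recalled in Section~\ref{sec:GH}), we get $d_{GH}(X,Y)\le d_H(X,Y)=0$. Hence in both cases $d_{GH}(X,Y)=0$, and by Corollary~\ref{thm:UltraDownZero} also $d_{GH}\bigl(U(X),U(Y)\bigr)=0$.

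Next I would invoke the triangle inequality for $d_{GH}$, which is a generalized metric on the Gromov--Hausdorff class. Applied to the triple $(Z,X,Y)$, it gives
$$
d_{GH}(Z,X)\le d_{GH}(Z,Y)+d_{GH}(Y,X)=d_{GH}(Z,Y),
$$
and swapping the roles of $X$ and $Y$ gives the reverse inequality, whence $d_{GH}(Z,X)=d_{GH}(Z,Y)$. Applying the same argument to the triple $\bigl(Z,U(X),U(Y)\bigr)$ using $d_{GH}\bigl(U(X),U(Y)\bigr)=0$ yields $d_{GH}\bigl(Z,U(X)\bigr)=d_{GH}\bigl(Z,U(Y)\bigr)$.

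Finally, to obtain the diameter statement I would specialize to $Z=\D_1$ and use the formula $2d_{GH}(\D_1,X)=\diam X$ from Section~\ref{sec:GH}, applied both to $X,Y$ and to $U(X),U(Y)$. There is essentially no obstacle here; the only point that requires care is checking that the ``$d_H(X,Y)=0$ in a common ambient space'' hypothesis really implies $d_{GH}(X,Y)=0$, which reduces to the standard embedding characterization of $d_{GH}$.
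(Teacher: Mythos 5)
Your proposal is correct and follows exactly the route the paper intends: the paper's entire justification is the one-line remark ``Let's apply the triangle inequality and Corollary~\ref{thm:UltraDownZero}'', and you have simply filled in those details (reducing the $d_H(X,Y)=0$ case to $d_{GH}(X,Y)=0$ via the embedding characterization, applying the triangle inequality in both directions, and specializing to $Z=\D_1$ with $2d_{GH}(\D_1,X)=\diam X$). No gaps.
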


A metric space $X$ is called \emph{dotted connected\/} if $U(X)=\D_1$. The latter means that each pair of points from $X$ can be connected by a dotted line $L$ with an arbitrarily small ultrametric length.

\begin{rk}[\cite{MikhailovUltra}]\label{rk:pathconnectedtodelta1}
Any path-connected metric space $X$ is dotted connected.
\end{rk}

Note that not every dotted connected space is path-connected, for example, the set of rational points of the segment $[0,\,1]$.

\section{Kuratowski's Embedding}
\markright{\thesection.~Kuratowski's Embedding}

Let $X$ be an arbitrary metric space. By $C_b(X)$ we denote the Banach space of all bounded continuous real-valued functions on $X$ with $\sup$-norm.

\begin{thm}[\cite{Kuratowski}]
For an arbitrary metric space $X$ and a fixed point $x_0\in X$, the mapping
$$
\Phi\:X\to C_b(X),\;\Phi(x)(y)=d_X(x,\,y)-d_X(x_0,\,y)\quad\forall\,x,\,y\in X
$$
is isometric.
\end{thm}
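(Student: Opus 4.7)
The plan is to verify two things: (a) that the map $\Phi$ actually lands in $C_b(X)$, and (b) that it preserves distances. Both will follow from the reverse triangle inequality applied twice, once in the $y$-variable and once in the $x$-variable.

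First I would check that $\Phi(x)\in C_b(X)$ for every $x$. Boundedness is immediate: for all $y\in X$,
$$
|\Phi(x)(y)|=\bigl|d_X(x,y)-d_X(x_0,y)\bigr|\le d_X(x,x_0),
$$
by the reverse triangle inequality, so $\|\Phi(x)\|_\infty\le d_X(x,x_0)$. For continuity in $y$, another application of the reverse triangle inequality gives
$$
|\Phi(x)(y)-\Phi(x)(y')|\le\bigl|d_X(x,y)-d_X(x,y')\bigr|+\bigl|d_X(x_0,y)-d_X(x_0,y')\bigr|\le 2\,d_X(y,y'),
$$
so $\Phi(x)$ is $2$-Lipschitz on $X$ and in particular continuous.

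Next I would compute $\|\Phi(x)-\Phi(x')\|_\infty$ for $x,x'\in X$. The common term $d_X(x_0,y)$ cancels, leaving
$$
\|\Phi(x)-\Phi(x')\|_\infty=\sup_{y\in X}\bigl|d_X(x,y)-d_X(x',y)\bigr|.
$$
The upper bound $\le d_X(x,x')$ is again the reverse triangle inequality applied pointwise in $y$. The matching lower bound comes from evaluating at the single point $y=x'$: there $|d_X(x,x')-d_X(x',x')|=d_X(x,x')$, so the supremum is at least, and hence exactly, $d_X(x,x')$. This proves $\Phi$ is isometric.

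There is essentially no obstacle here: the whole argument is two invocations of the reverse triangle inequality together with the choice $y=x'$ to saturate the bound. The reason one subtracts the base-point term $d_X(x_0,y)$ is precisely to make $\Phi(x)$ bounded without disturbing the difference $\Phi(x)-\Phi(x')$, and that is the only nontrivial idea in the proof.
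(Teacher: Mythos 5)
Your argument is correct and is the standard proof of the Kuratowski embedding: the paper itself states this as a cited classical result without proof, and your two applications of the reverse triangle inequality (for boundedness/continuity and for the upper bound on $\|\Phi(x)-\Phi(x')\|_\infty$) together with the evaluation at $y=x'$ to attain the bound are exactly what is needed. Nothing is missing.
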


We will need the following construction from~\cite{MikhailovUltra}. Consider a metric space $X$ and an arbitrary real $t>0$. Let $\Phi\:X\to C_b(X)$ be the Kuratowski embedding. Add to $\Phi(X)$ all segments in $C_b(X)$ with endpoints at $\Phi(x)$, $\Phi(y)$, for which $d_X(x,\,y)\le t$. By $D_t(X)$ we denote the resulting subset of $C_b(X)$ with the induced metric. We also define $D_0(X)$, setting it equal to $\Phi(X)$.

\begin{lem}[\cite{MikhailovUltra}]\label{lem: Dc(X)}
Let $X$ be a metric space. Then
\begin{enumerate}
\item if $\diam U(X)<t<\infty$, then the space $D_t(X)$ is path-connected\/\rom;
\item if $\diam U(X)=0$, then the space $D_0(X)$ is dotted connected\/\rom;
\item for an arbitrary $t\ge 0$, it holds $d_{GH}\bigl(X,\,D_t(X)\bigr)\le t/2$.
\end{enumerate}
\end{lem}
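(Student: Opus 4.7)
The plan is to handle the three claims separately, treating (1) and (2) as structural consequences of the dotted-line definition and (3) as an explicit correspondence estimate.

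For part (1), I would start with any two points $x,y\in X$. Since $\diam U(X)<t$, by definition of the ultrametric $|\cdot\cdot|_u$ there is a dotted line $L=\{x=x_0,x_1,\ldots,x_n=y\}$ in $X$ with $|L|_u<t$, that is, $|x_{i-1}x_i|\le t$ for every $i$. By the very construction of $D_t(X)$, each segment $[\Phi(x_{i-1}),\Phi(x_i)]$ is included in $D_t(X)$. Concatenating these segments yields a continuous path from $\Phi(x)$ to $\Phi(y)$ inside $D_t(X)$. Any point lying on an added segment is obviously path-connected to its endpoints, so $D_t(X)$ is path-connected.

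For part (2), note that $D_0(X)=\Phi(X)$ is isometric to $X$ through the Kuratowski embedding, so $U(D_0(X))$ is isometric to $U(X)$. The assumption $\diam U(X)=0$ then gives $U(D_0(X))=\D_1$, which is exactly the definition of dotted connectedness of $D_0(X)$.

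For part (3), I would build an explicit correspondence $R\in\cR\bigl(X,D_t(X)\bigr)$ and bound its distortion. Put $(x,\Phi(x))\in R$ for every $x\in X$, and for every point $p\in D_t(X)\sm\Phi(X)$ lying on an added segment $[\Phi(x),\Phi(y)]$ with $|xy|\le t$, put $(z,p)\in R$ where $z\in\{x,y\}$ is the endpoint closer to $p$ in $C_b(X)$. The key quantitative observation is that for every pair $(z,p)\in R$ one has $|p\,\Phi(z)|\le |xy|/2\le t/2$, because $p$ sits on a segment of length $|xy|\le t$ and we chose the nearer endpoint. Now for any two pairs $(z_1,p_1),(z_2,p_2)\in R$, the triangle inequality in $C_b(X)$ combined with the isometry of $\Phi$ gives
$$
\bigl||z_1z_2|-|p_1p_2|\bigr|=\bigl||\Phi(z_1)\Phi(z_2)|-|p_1p_2|\bigr|\le |p_1\Phi(z_1)|+|p_2\Phi(z_2)|\le t/2+t/2=t.
$$
Hence $\dis R\le t$ and $d_{GH}\bigl(X,D_t(X)\bigr)\le t/2$.

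The only delicate point in the whole argument is the factor $1/2$ in part (3): matching each segment point with an arbitrary endpoint would only give the bound $t$ instead of $t/2$. Choosing the \emph{nearer} endpoint is what makes the argument tight; the other two items follow directly from the definitions of dotted line, ultrametric length, and the Kuratowski embedding.
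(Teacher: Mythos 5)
The paper states this lemma as a citation from~\cite{MikhailovUltra} and gives no proof of its own, so there is nothing to compare against; your argument is correct and is the natural one. Part (1) via concatenating the segments $[\Phi(x_{i-1}),\Phi(x_i)]$ along a dotted line of ultrametric length below $t$, part (2) via the isometry invariance of the ultrametrization $U$ under the Kuratowski embedding, and part (3) via the correspondence that matches each point of an added segment with its \emph{nearer} endpoint (giving $\dis R\le t$ by the triangle inequality) are all sound, and you correctly identify the nearer-endpoint choice as the step that produces the factor $1/2$.
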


\begin{cor}\label{cor:pathconnectedenseinBLconnected}
The closure of the class of all path-connected metric spaces in the Gromov--Hausdorff class coincides with the class of all dotted connected metric spaces.
\end{cor}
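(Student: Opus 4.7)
The plan is to prove the two inclusions separately, with each direction using one half of Lemma~\ref{lem: Dc(X)} together with Theorem~\ref{thm:UltraDownEst} and Remark~\ref{rk:pathconnectedtodelta1}.

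For the inclusion that every dotted connected space lies in the closure of the class of path-connected spaces, I would take an arbitrary dotted connected $X$, so that $\diam U(X)=0$. For each $t>0$, item~(1) of Lemma~\ref{lem: Dc(X)} applies (with $\diam U(X)=0<t<\infty$) and gives that $D_t(X)$ is path-connected, while item~(3) yields $d_{GH}\bigl(X,D_t(X)\bigr)\le t/2$. Letting $t\to 0$ produces an explicit approximating family of path-connected spaces converging to $X$ in the Gromov--Hausdorff metric, so $X$ lies in the closure.

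For the reverse inclusion, suppose $X$ belongs to the closure of the class of path-connected spaces. Choose path-connected spaces $X_n$ with $d_{GH}(X_n,X)\to 0$. By Remark~\ref{rk:pathconnectedtodelta1}, each $X_n$ is dotted connected, i.e.\ $U(X_n)=\D_1$. Theorem~\ref{thm:UltraDownEst} then gives
$$
d_{GH}\bigl(\D_1,U(X)\bigr)=d_{GH}\bigl(U(X_n),U(X)\bigr)\le d_{GH}(X_n,X)\to 0.
$$
Since $2\,d_{GH}\bigl(\D_1,U(X)\bigr)=\diam U(X)$, we conclude $\diam U(X)=0$. Because $U(X)$ is a genuine metric space (the quotient by the zero locus of the pseudometric $|\cdot\,\cdot|_u$), the vanishing of its diameter forces $U(X)$ to be a single point, hence $U(X)=\D_1$, so $X$ is dotted connected.

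The only subtle point, and what I expect to be the main conceptual obstacle, is interpreting the notion of closure inside the proper class $\GH_0$ and making sure both inclusions are phrased in a way that avoids set-theoretic issues; this is handled by working directly with approximating sequences (or, equivalently, with elements at arbitrarily small Gromov--Hausdorff distance) rather than with topological closure in a set-theoretic sense. The remaining ingredients are all already packaged: Lemma~\ref{lem: Dc(X)} for the forward direction and the monotonicity of ultrametrization from Theorem~\ref{thm:UltraDownEst} together with Remark~\ref{rk:pathconnectedtodelta1} for the reverse direction.
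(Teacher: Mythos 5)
Your argument is correct and follows essentially the same route as the paper: the forward inclusion via items (1) and (3) of Lemma~\ref{lem: Dc(X)} applied to $D_t(X)$ with $t\to 0$, and the reverse inclusion via Theorem~\ref{thm:UltraDownEst} combined with $U(X_n)=\D_1$ and $2d_{GH}(\D_1,U(X))=\diam U(X)$. The only cosmetic difference is that you invoke Remark~\ref{rk:pathconnectedtodelta1} explicitly and spell out why $\diam U(X)=0$ forces $U(X)=\D_1$, steps the paper leaves implicit.
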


\begin{proof}
First, let $X$ be a dotted connected metric space. Then by Lemma~\ref{lem: Dc(X)} the space $D_t(X)$ is path-connected for each $t>0$, and $d_{GH}\bigl(X,\,D_t(X)\bigr)\le t/2$. Thus, $X$ is contained in the closure of the family of all path-connected metric spaces.

Now suppose that the sequence $X_n$ of path-connected metric spaces converges in the Gromov-Hausdorff sense to a metric space $X$. Then by Theorem~\ref{thm:UltraDownEst} we have
$$
d_{GH}(X_n,\,X)\ge d_{GH}\bigl(U(X_n),\,U(X)\bigr)=d_{GH}\bigl(\D_1,\,U(X)\bigr)=\frac12\diam U(X).
$$
Since $d_{GH}(X_n,\,X)\to 0$ as $n\to\infty$, we obtain that $\diam U(X)=0$, that is, $X$ is dotted connected.
\end{proof}

\section{Metric trees}
\markright{\thesection.~Metric trees}
Let $\{e_\a\}_{\a\in I}$ be an arbitrary family of segments of the real line. Let us set $\cE=\sqcup_{\a\in I}e_\a$ and on the obtained topological space we introduce an arbitrary equivalence relation $\sim$, identifying some ends of the segments $e_\a$. The quotient space $\cG=\cE/\!\sim$ is called a \emph{topological graph}. If $\pi\:\cE\to\cE/\!\sim$ is the canonical projection, then the $\pi$-images the endpoints of the segments $e_\a$ are called \emph{the vertices of $\cG$}, and the $\pi$-images of the segments $e_\a$ \emph{the edges $\cG$}. The terminology of the graph theory is directly transferred to topological graphs, in particular, for they there are defined the degrees of vertices, the paths, the cycles, the concept of connectivity, of connected component, etc. A topological graph $\cG$ is called a \emph{tree}, if the graph is connected and does not contain cycles. Note that for a connected graph $\cG$, the standard distance functions on segments $e_\a$ is naturally extended to the whole $\cG$ and generates an intrinsic pseudometric, see for details~\cite{BurBurIva}. A topological tree with such distance function is be called a \emph{metric tree}.

A metric tree is called \emph{finite\/} if it contains a finite number of edges. \emph{The boundary $\d T$} of a finite metric tree $T$ is the subset of $T$ consisting of all vertices of $T$ of degree $1$. The next result was obtained in~\cite{MetricTrees} for finite metric trees.

\begin{thm}\label{thm:CompactMetricTrees}
Let $T$ be a finite metric tree. Then for any non-empty subsets $X\ss T$, the inequality $d_H(T,\,X)>\overrightarrow{d_H}(\d T,\,X)$ implies $d_{GH}(T,\,X)=d_H(T,\,X)$.
\end{thm}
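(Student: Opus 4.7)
The plan is to sandwich $d_{GH}(T,X)$ between two equal quantities. The bound $d_{GH}(T,X)\le d_H(T,X)$ is the standard fact that placing both spaces inside their common ambient $T$ realizes $d_{GH}$ as an infimum no larger than $d_H$. For the matching lower bound I would use the ultrametric estimate of Theorem~\ref{thm:UltraDownEst}: since a finite metric tree is path-connected, Remark~\ref{rk:pathconnectedtodelta1} gives $U(T)=\D_1$, hence $d_{GH}(T,X)\ge d_{GH}\bigl(U(T),U(X)\bigr)=\tfrac12\diam U(X)$. The whole proof thus reduces to showing $\diam U(X)\ge 2\,d_H(T,X)$.

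Set $r=d_H(T,X)$ and $\delta=r-\overrightarrow{d_H}(\d T,X)>0$. For any $\e\in(0,\delta)$ pick $t^*\in T$ with $|t^*X|>r-\e$. First I would verify that $t^*\notin\d T$ (otherwise $|t^*X|\le r-\delta<r-\e$) and then that at least two components of $T\setminus\{t^*\}$ meet $X$. The latter rests on a small lemma specific to \emph{finite} trees: every component of $T\setminus\{t^*\}$ contains a vertex of $\d T$, obtained by walking away from $t^*$; since $T$ has finitely many edges the walk terminates at a leaf, and this leaf cannot be $t^*$ as $t^*\notin\d T$. If only one component met $X$, then taking any $s\in\d T$ in another component, the tree-geodesic identity $|sx|=|st^*|+|t^*x|$ valid for every $x\in X$ would give $|sX|=|st^*|+|t^*X|>|st^*|+r-\e$, while $s\in\d T$ forces $|sX|\le r-\delta$; this yields $|st^*|<\e-\delta<0$, a contradiction. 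Hence one can select $x_1,x_2\in X$ lying in distinct components of $T\setminus\{t^*\}$.

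The final step converts this topological separation into an ultrametric bound. For any dotted line $x_1=y_0,y_1,\ldots,y_n=x_2$ in $X$, let $i$ be the last index with $y_i$ in the component of $x_1$. Then $y_i$ and $y_{i+1}$ lie in distinct components of $T\setminus\{t^*\}$, so their unique $T$-geodesic passes through $t^*$ and
\[
|y_iy_{i+1}|=|y_it^*|+|t^*y_{i+1}|>2(r-\e),
\]
since both endpoints lie in $X$. Therefore $|L|_u>2(r-\e)$ for every $L\in\cL_{x_1,x_2}$ inside $X$, so $|x_1x_2|_u\ge 2(r-\e)$, giving $\diam U(X)\ge 2(r-\e)$. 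Letting $\e\to 0$ and plugging into Theorem~\ref{thm:UltraDownEst} yields $d_{GH}(T,X)\ge r$, which together with the trivial upper bound closes the argument.

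The main technical obstacle is the component argument of the second paragraph: translating the quantitative hypothesis $d_H(T,X)>\overrightarrow{d_H}(\d T,X)$ into the combinatorial statement that at least two branches of $T\setminus\{t^*\}$ meet $X$. Finiteness of $T$ is essential precisely here, to guarantee a boundary vertex of $T$ inside every branch via the walk-to-a-leaf argument; once this separation is in place, the ultrametrization mechanism converts the tree's geodesic structure directly into the sharp lower bound.
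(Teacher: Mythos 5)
Your proof is correct and follows essentially the same route the paper itself takes: it does not reprove Theorem~\ref{thm:CompactMetricTrees} directly (it is quoted from~\cite{MetricTrees}, where the proof rests on Lemma~\ref{lem:ImageofConnected}), but its generalization, Theorem~\ref{thm:infinitemetrictrees}, is proved exactly by your ultrametrization argument --- lower-bounding $d_{GH}(T,X)$ by $\tfrac12\diam U(X)$ via Theorem~\ref{thm:UltraDownEst} and then showing that a point far from $X$ separates two points of $X$, forcing every dotted line in $X$ to contain a long step. The only cosmetic difference is that where you use the walk-to-a-leaf argument on $\d T$ to show two branches at $t^*$ meet $X$, the paper packages the same fact into the notion of points \emph{internal with respect to\/} $X$ and the identity $d_H(X,T)=\overrightarrow{d_H}(\Int_XT,X)$.
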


The proof of this theorem was based mainly on the following lemma.

\begin{lem}\label{lem:ImageofConnected}
Let $X$ be a subset of a metric space $G$ with intrinsic metric, $R\in\cR(G,\,X)$, $\dis R<r$. Then for any connected subset $S\ss G$, the set $B_r\bigl(R(S)\bigr)$ is connected.
\end{lem}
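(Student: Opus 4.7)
The plan is to reduce connectedness of $B_r\bigl(R(S)\bigr)$ to that of the open neighborhood
$$
U := \bigl\{y \in G : |yR(S)| < r\bigr\},
$$
and then pass to closures in $G$. Two standard facts are used. First, in an intrinsic metric space every open metric ball $\{y : |yx| < r\}$ is path-connected: a near-shortest curve from $x$ to $y$ has length $< r$, so every point on it stays at distance at most that length from $x$ and the curve lies in the ball. Second, any connected metric space is $\e$-chainable for every $\e > 0$ (the set of points reachable by a finite $\e$-chain from a fixed one is clopen); in particular this applies to $S$ with its induced metric.

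Step 1: $U$ is connected. Write $U = \bigcup_{x \in R(S)}\{y : |yx| < r\}$. Fix $\e$ with $0 < \e < 2r - \dis R$, which is possible since $\dis R < r$. For any $x_1, x_2 \in R(S)$ pick $s_i \in S$ with $x_i \in R(s_i)$, take an $\e$-chain $s_1 = p_0, p_1, \ldots, p_n = s_2$ in $S$, and choose $z_i \in R(p_i)$ with $z_0 = x_1$, $z_n = x_2$. The distortion bound yields $|z_i z_{i+1}| \le |p_i p_{i+1}| + \dis R < 2r$, so a near-shortest curve in $G$ from $z_i$ to $z_{i+1}$ has a midpoint lying in both $\{y : |yz_i| < r\}$ and $\{y : |yz_{i+1}| < r\}$. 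Consecutive open balls along the chain therefore overlap, placing $x_1$ and $x_2$ in the same connected component of $U$; any $y \in U$ lies in some open ball around an $x \in R(S)$ and is connected to $x$ inside $U$, so $U$ is connected.

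Step 2: $B_r\bigl(R(S)\bigr) = \overline{U}$ in $G$. Since $y \mapsto |yR(S)|$ is $1$-Lipschitz, $B_r\bigl(R(S)\bigr)$ is closed and contains $U$, hence contains $\overline{U}$. For the reverse inclusion, given $y$ with $|yR(S)| = r$, take $x_n \in R(S)$ with $|yx_n| \to r$ and near-shortest arc-length parametrized curves $\gamma_n \: [0, L_n] \to G$ from $y$ to $x_n$ with $L_n \to r$ from above; the points $w_n := \gamma_n(t_n)$ with $t_n := L_n - r + 1/n$ satisfy $|w_n x_n| \le L_n - t_n < r$ and $|w_n y| \le t_n \to 0$, so $w_n \in U$ and $w_n \to y$. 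Hence $B_r\bigl(R(S)\bigr)$ is the closure of a connected set, and is therefore connected.

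The main obstacle is precisely the boundary case $|yR(S)| = r$, which cannot be accessed by a single short curve staying inside the closed neighborhood and forces the two-step route through $U$ and $\overline{U}$; the strict inequality $\dis R < r$ is what provides the slack $2r - \dis R > 0$ needed to match $\e$-chains in $S$ with pairs in $R(S)$ whose open $r$-balls in $G$ overlap.
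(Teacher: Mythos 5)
Your proof is correct. Note that the paper itself does not prove this lemma: it is quoted from~\cite{MetricTrees} (their Lemma~2.3) as the key ingredient of Theorem~\ref{thm:CompactMetricTrees}, so there is no in-paper argument to compare against. Your two-step route is sound and self-contained: open balls in a space with intrinsic metric are path-connected via near-shortest curves; connectedness of $S$ gives $\e$-chains, and the distortion bound $|z_iz_{i+1}|\le\e+\dis R<2r$ makes consecutive open $r$-balls around the chosen image points overlap at a midpoint of a near-geodesic, so the open neighborhood $U=\{y:|yR(S)|<r\}$ is connected; finally $B_r\bigl(R(S)\bigr)=\overline{U}$ (your approximation of boundary points by points $w_n\in U$ along near-shortest curves is the right way to handle the case $|yR(S)|=r$, where a single curve inside the closed neighborhood need not exist), and the closure of a connected set is connected. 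Two minor observations: your argument in fact only uses $\dis R<2r$, so it proves a slightly stronger statement than the one quoted; and the degenerate cases ($S$ a single point, or $1/n>r$ for small $n$ in Step~2) are harmless since $r>0$. This matches in spirit the chaining argument used in~\cite{MetricTrees}.
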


Below we will show that the technique by which the Theorem~\ref{thm:CompactMetricTrees} was proved is closely related to the ultrametrization, if instead of connectivity we consider the linear connectivity. We will also show that the statement of Theorem~\ref{thm:CompactMetricTrees} can be transferred to the case of infinite metric trees without serious changes.

\section{Canonical Hausdorff geodesics}
\markright{\thesection.~Canonical Hausdorff geodesics}
Let $X$ be an arbitrary proper geodesic metric space, and suppose that $A$ and $B$ are non-empty closed subsets of $X$ at a finite Hausdorff distance from each other: $d=d_H(A,B)<\infty$. For each $t\in[0,d]\ss\R$, we put $C_t=B_t(A)\cap B_{dt}(B)$. Then each $C_t$ is closed in $X$, i.e\. $C_t\in\cC(X)$ for all $t$.

\begin{thm}\label{thm:HausdorffGeodesic}
In the notation introduced above, the sets $C_t$ are non-empty, and the curve $t\mapsto C_t$ is a shortest geodesic in $\cC(X)$ such that its length equals to the distance between its ends.
\end{thm}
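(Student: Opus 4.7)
The plan is to establish three claims in sequence: first, that each $C_t$ is non-empty (closedness of $C_t$ is automatic from the definition, which I read as $C_t=B_t(A)\cap B_{d-t}(B)$); second, that the map $t\mapsto C_t$ is $1$-Lipschitz, i.e., $d_H(C_s,C_t)\le|s-t|$; and third, that this Lipschitz curve has length exactly $d_H(A,B)$ and is therefore a shortest geodesic with $d_H(C_s,C_t)=|s-t|$. I intend to use properness of $X$ together with closedness of $A$ and $B$ to realize nearest-point distances, and geodesicity of $X$ to slide points along shortest curves between realized pairs.

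For the first claim, I would fix $a\in A$ and use properness to select $b\in B$ with $|ab|=|aB|\le d$; a geodesic $\gamma\:[0,|ab|]\to X$ from $a$ to $b$ then gives $\gamma(t)\in C_t$ for every $t\in[0,|ab|]$, because $|\gamma(t)a|=t$ and $|\gamma(t)b|=|ab|-t\le d-t$. For $t\in[|ab|,d]$ the endpoint $b$ itself lies in $C_t$, since $|bA|\le|ab|\le t$ and $|bB|=0\le d-t$.

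For the second claim, take $s<t$ and $p\in C_s$; I want $q\in C_t$ within distance $t-s$ of $p$. Properness supplies $b\in B$ with $|pb|=|pB|\le d-s$, and I set $q$ to be the point at distance $\min\{t-s,|pb|\}$ from $p$ along a geodesic from $p$ to $b$. The estimates $|qA|\le|qp|+|pA|\le(t-s)+s=t$ and $|qB|\le\max\{0,|pb|-(t-s)\}\le d-t$ put $q\in C_t$, and $|pq|\le t-s$ by construction. The reverse inclusion $C_t\ss B_{t-s}(C_s)$ is obtained symmetrically, sliding a point of $C_t$ toward a nearest element of $A$.

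For the third claim, the Lipschitz bound gives curve length at most $d$, while $C_0=A$ and $C_d=B$ follow from closedness of $A,B$ together with $A\ss B_d(B)$ and $B\ss B_d(A)$, so the length is at least $d_H(C_0,C_d)=d$. Equality then forces $d_H(C_s,C_t)=|s-t|$ for every $s<t$, by applying the triangle inequality to the partition $\{0,s,t,d\}$ and using the Lipschitz estimates on the outer two pieces to squeeze the middle. The most delicate step will be the case analysis in the second claim when $|pb|<t-s$: the geodesic must be truncated at $b$, and it is precisely here that properness is indispensable, because without the existence of a nearest point $b\in B$ there is no endpoint from which to close the distance estimates to $A$ and $B$.
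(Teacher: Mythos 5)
Your proposal is correct and follows essentially the same route as the paper's proof: properness is used to realize nearest points in the closed sets $A$ and $B$, geodesics are used to slide points so as to verify non-emptiness of $C_t$ and the Lipschitz bound $d_H(C_s,C_t)\le|s-t|$, and the triangle inequality applied to $d_H(C_0,C_d)=d$ then forces equality throughout. The only cosmetic difference is that you treat the cases $t\le|ab|$ and $t\ge|ab|$ explicitly in the non-emptiness step, where the paper selects a suitable parameter $t_0$ on the same geodesic.
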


\begin{proof}
Let us show that $C_t$ are non-empty. We choose an arbitrary point $a\in A$ and positive $\e$, then the set $D:=B_{d+\e}(a)\cap B$ is non-empty and is compact, so there exists $b\in D$ at which the distance from $a$ to $D$ is achieved. Since $\bigl|a\,(B\sm D)\bigr|\ge d+\e$, and $|aD|\le d+\e$, then
$$
|aB|=\min\Bigl\{|aD|,\bigl|a\,(B\sm D)\bigr|\Bigr\}=|aD|.
$$
By definition of the Hausdorff distance, $|aB|<d+\dl$ for all $\dl>0$, therefore $r:=|ab|=|aD|\le d$.

Let us connect $a$ and $b$ by a shortest naturally parameterized geodesic $\g\:[0,r]\to X$, then $r\le d$, so there exists $t_0\in[0,r]$, for which $\bigl|a\g(t_0)\bigr|\le t$ and $\bigl|\g(t_0)b\bigr|\le dt$. But then $\g(t_0)\in C_t$, which proves that $C_t$ is non-empty.

Let us now show that for $0\le t\le s\le d$ it holds $d_H(C_t,C_s)\le st$. To do this, it is enough to check that for each $x\in C_t$ we have $|xC_s|\le st$ and for each $y\in C_s$ it holds $|yC_t|\le s-t$. Since both statements are proven in exactly the same way, let's check the first of them only.

By construction, there exist $a\in A$ and $b\in B$ for which $|xa|\le t$ and $|xb|\le dt$. Let us connect $x$ and $b$ by a shortest geodesic and choose on this geodesic a point $z$, for which $|zb|=\min\bigl\{d-s,|xb|\bigr\}$. Then $|xz|\le st$, whence, by virtue of the triangle inequality, $|az|\le s$. Therefore, $z\in C_s$ and $|xz|\le st$, that was required. Now the assertion of the theorem follows from the triangle inequality for the Hausdorff distance.
\end{proof}

The curve from Theorem~\ref{thm:HausdorffGeodesic} is called \emph{the canonical Hausdorff geodesic}.

\section{Infinite metric trees}
\markright{\thesection.~Infinite metric trees}
Let $X$ be an arbitrary metric space. We put
\begin{align*}
t(X)&=\inf\bigl\{d_{GH}(X,\,Y)\: Y\,\text{is path-connected}\bigr\}, \\
\tilde{t}(X)&=\inf\bigl\{d_{GH}(X,\,Y)\: Y\,\text{dotted connected}\bigr\}.
\end{align*}

\begin{thm}
The equalities $2t(X)=2\tilde{t}(X)=\diam U(X)$ are satisfied.
\end{thm}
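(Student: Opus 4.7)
The plan is to sandwich everything by establishing the chain
\[
\tfrac{1}{2}\diam U(X) \le \tilde{t}(X) \le t(X) \le \tfrac{1}{2}\diam U(X),
\]
from which the theorem follows at once. The middle inequality is free of charge: by Remark~\ref{rk:pathconnectedtodelta1} every path-connected metric space is dotted connected, so the infimum defining $t(X)$ is taken over a subclass of the one defining $\tilde{t}(X)$.

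For the lower bound, I would apply the ultrametrization inequality. For any dotted connected $Y$ one has $U(Y)=\D_1$, so Theorem~\ref{thm:UltraDownEst}, together with the identity $2d_{GH}(\D_1,Z)=\diam Z$ recalled in Section~\ref{sec:GH}, gives
\[
d_{GH}(X,Y)\ge d_{GH}\bigl(U(X),U(Y)\bigr)=d_{GH}\bigl(U(X),\D_1\bigr)=\tfrac{1}{2}\diam U(X).
\]
Taking the infimum over all dotted connected $Y$ yields $\tilde{t}(X)\ge \tfrac{1}{2}\diam U(X)$.

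For the matching upper bound on $t(X)$, I would use the Kuratowski-segment construction $D_t(X)$ from Lemma~\ref{lem: Dc(X)}. Assume first $\diam U(X)<\infty$. For every real $t$ with $\diam U(X)<t<\infty$, item~(1) of that lemma asserts that $D_t(X)$ is path-connected, while item~(3) gives $d_{GH}\bigl(X,D_t(X)\bigr)\le t/2$. Hence $t(X)\le t/2$ for all such $t$, and letting $t\downarrow \diam U(X)$ produces $t(X)\le \tfrac{1}{2}\diam U(X)$.

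It only remains to handle the extremes. If $\diam U(X)=0$, the previous paragraph still applies with any $t>0$ and forces $t(X)=0=\tilde{t}(X)$. If $\diam U(X)=\infty$, the lower bound already forces $\tilde{t}(X)=t(X)=\infty$, so the chain degenerates to a tautology. I do not anticipate a serious obstacle: the entire argument rests on recognising that Theorem~\ref{thm:UltraDownEst} converts the ultrametrization of a dotted connected space into $\D_1$ and thereby supplies the lower bound, whereas the spaces $D_t(X)$ supply explicit path-connected approximations saturating it.
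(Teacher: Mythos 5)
Your proof is correct and uses the same two key ingredients as the paper: the ultrametrization inequality (Theorem~\ref{thm:UltraDownEst}) for the lower bound and the path-connected spaces $D_t(X)$ from Lemma~\ref{lem: Dc(X)} for the upper bound. The only cosmetic difference is that you obtain $t(X)=\tilde{t}(X)$ by sandwiching via the trivial inclusion of the path-connected class into the dotted connected one, whereas the paper cites Corollary~\ref{cor:pathconnectedenseinBLconnected} for that equality; the substance is identical.
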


\begin{proof}
The first equality follows from Corollary~\ref{cor:pathconnectedenseinBLconnected}. Let us justify the second equality.

To start with, suppose that $\diam U(X)=\infty$. Assume that $\tilde{t}(X)<\infty$, i.e., there is a dotted connected space $Y$ at a finite Gromov--Hausdorff distance from $X$. By Theorem~\ref{thm:UltraDownEst} we have $d_{GH}(X,\,Y)\ge d_{GH}\bigl(U(X),\,U(Y)\bigr)$. However, $U(Y)=\D_1$ by definition, therefore $d_{GH}\bigl(U(X),\,U(Y)\bigr)=\frac12\diam U(X)=\infty$, whence $d_{GH}(X,\,Y)=\infty$, a contradiction.

Now let $c:=\diam U(X)<\infty$. For each $t>c$ we construct the space $D_t(X)$ described before Lemma~\ref{lem: Dc(X)}. By the same lemma, the space $D_t(X)$ is path-connected and $d_{GH}\bigl(X,D_t(X)\bigr)\le t/2$, whence $\tilde{t}(X)=t(X)\le c/2$ in due to the arbitrariness of $t$. On the other hand, Theorem~\ref{thm:UltraDownEst} implies that for any path-connected metric space $Y$ the following inequality holds:
$$
d_{GH}(X,\,Y)\ge d_{GH}\bigl(U(X),\,U(Y)\bigr)=d_{GH}\bigl(U(X),\,\D_1\bigr)=c/2.
$$
Passing to the infimum, we obtain $\tilde{t}(X)\ge c/2$, which completes the proof.
\end{proof}

Let $X$ be a non-empty subset of a metric tree $T$. A point $a\in T\sm X$ is called \emph{internal with respect to $X$} if in the tree $T$ there exists a path (topological embedding of a segment) with ends in $X$, passing through $a$. The set of all such points $a$ is denoted by $\Int_XT$, and the remaining points from $T\sm X$ we call \emph{end points with respect to $X$} and their set we denote by $\d_XT$. If $T$ is a finite tree, then points from $(\d T)\sm X$ are contained in $\d_XT$ and are the most farthest from $X$ among all points in $\d_XT$; points in $(\d T)\cap X$ are at zero distance from $X$, so for finite trees we have $\overrightarrow{d_H}(\d T,\,X)=\overrightarrow{d_H}(\d_XT,\,X)$ (recall that the oriented Hausdorff distance from the empty set we defined to be equal to zero). For infinite trees, the set $\d T$ can be empty, but $\d_XT$ may not be empty. For example, if we glue the origins of the three rays together, and divide the rays into segments, thus obtaining an infinite tree $T$, and we take as $X$ the union of all the segments of two rays, then the open third ray will be non-empty $\d_XT$, however, in this case $\d T=\0$.

\begin{thm}\label{thm:infinitemetrictrees}
Let $T$ be an arbitrary metric tree, $X$ be non-empty subset of $T$ such that either $\d_XT=\0$, or $\d_XT\ne\0$ and $d_H(X,\,T)>\overrightarrow{d_H}(\d_XT,\,X)$. Then
$$
d_{GH}(X,\,T)=d_H(X,\,T).
$$
\end{thm}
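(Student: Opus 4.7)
The plan is to deduce $d_{GH}(X,T) \ge d_H(X,T)$ from the ultrametrization inequality Theorem~\ref{thm:UltraDownEst}; the opposite inequality is immediate since the inclusion $X \ss T$ is isometric. As a connected topological graph equipped with its intrinsic metric, $T$ is path-connected, so Remark~\ref{rk:pathconnectedtodelta1} gives $U(T) = \D_1$. Combined with $2d_{GH}(\D_1, Y) = \diam Y$, Theorem~\ref{thm:UltraDownEst} yields
$$
d_{GH}(X, T) \ge d_{GH}\bigl(U(X),\, U(T)\bigr) = d_{GH}\bigl(U(X),\, \D_1\bigr) = \tfrac{1}{2}\diam U(X),
$$
so the task reduces to proving $\diam U(X) \ge 2\,d_H(X,T)$.

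Set $d = d_H(X,T)$ and assume $d > 0$, since otherwise both sides of the desired equality vanish. Fix $\e > 0$ with $\e < d$ and, in the case $\d_X T \ne \0$, also with $\e < d - \overrightarrow{d_H}(\d_X T, X)$. Choose $a \in T$ with $|aX| > d - \e$. Then $a \notin X$, and the choice of $\e$ together with the hypothesis forces $a \notin \d_X T$, so $a \in \Int_X T$. By definition there exist $p, q \in X$ such that the unique tree geodesic from $p$ to $q$ passes through $a$; hence $p$ and $q$ lie in distinct connected components of $T \sm \{a\}$. For any dotted line $L = \{p = x_0, x_1, \ldots, x_n = q\} \ss X$, some consecutive pair $x_i, x_{i+1}$ must then lie in distinct components of $T \sm \{a\}$, so their unique tree geodesic passes through $a$, and since both $x_i, x_{i+1} \in X$ one obtains
$$
|x_i x_{i+1}| = |x_i a| + |a x_{i+1}| \ge 2|aX| > 2(d - \e).
$$
Therefore $|L|_u > 2(d - \e)$ for every such $L$, whence $|pq|_u \ge 2(d - \e)$ and $\diam U(X) \ge 2(d - \e)$. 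Letting $\e \to 0$ gives the desired bound (the case $d = \infty$ is handled identically by letting $|aX|$ grow without bound).

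The key conceptual step, in my view, is recognizing that Theorem~\ref{thm:UltraDownEst} converts the tree-separation property at $a$ directly into a lower bound on the ultrametric pseudodistance $|\cdot\,\cdot|_u$, bypassing the correspondence-and-connected-neighbourhood machinery of Lemma~\ref{lem:ImageofConnected} that drives the finite-tree argument in~\cite{MetricTrees}; in particular, no compactness of $T$ is needed. The role of the hypothesis on $\d_X T$ is precisely to guarantee that a nearly extremal $a$ can be placed in $\Int_X T$, so that the two witnesses $p, q \in X$ on opposite sides of $a$ actually exist. Once that is secured, the remaining ingredients reduce to elementary tree geometry: uniqueness of geodesics in a tree and the fact that removing an interior point separates any pair of points on opposite sides.
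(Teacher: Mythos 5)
Your proof is correct and follows essentially the same route as the paper: the ultrametrization lower bound $d_{GH}(X,T)\ge\frac12\diam U(X)$ from Theorem~\ref{thm:UltraDownEst}, followed by placing a nearly extremal point $a$ in $\Int_XT$ (using the hypothesis on $\d_XT$) and using the tree-separation of $T\sm\{a\}$ to force some consecutive pair of any dotted line in $X$ to be at distance at least $2|aX|$. The only difference is organizational: you lower-bound $|pq|_u$ directly and treat the finite and infinite cases of $d_H(X,T)$ uniformly, whereas the paper splits into cases and, in the finite case, upper-bounds $|aX|$ by $(\diam U(X)+\e)/2$ via a near-optimal dotted line --- the two estimates are the same inequality read in opposite directions.
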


\begin{proof}
It is known that $d_{GH}(X,\,T)\le d_H(X,\,T)$, so it suffices to prove the opposite inequality. If $d_H(X,\,T)=0$, then the reverse inequality is obvious. Now let $d_H(X,\,T)\ne0$.

Let $c=\diam U(X)$. It follows from Theorem~\ref{thm:UltraDownEst} that
$$
d_{GH}(X,\,T)\ge d_{GH}\bigl(U(X),\,U(T)\bigr)=d_{GH}\bigl(U(X),\,\D_1\bigr)=c/2,
$$
therefore, it suffices to justify the equality $c=2d_H(X,\,T)$.

To start with, let $d_H(X,\,T)=\infty$. We must show that $\diam U(X)=\infty$. Let's consider two cases.

(1) Suppose that $\d_XT=\0$, then each point from $T\sm X$ is internal w.r.t\. $X$. On the other hand, due to $d_H(X,\,T)=\infty$, for any $d>0$ there exists $a\in T\sm X$ such that $|aX|>d$. Since $\d_XT=\0$, then in each of the connected components of the space $T\sm\{a\}$ there is a point from $X$. Let's choose any two of these components, and in them arbitrary points $x$ and $x'$, respectively. Consider any dotted line $L=\{x_0=x,x_1,\ldots,x_n=x'\}$, where $x_i\in X$ for all $i$. Then there exist $x_{i-1}$ and $x_i$ lying in different components. Since $|aX|>d$, then $|x_{i-1}x_i|>2d$, whence $|L|_u\ge2d$. By virtue of arbitrariness of the dotted line $L$, we have $|x_{i-1}x_i|\ge2d$, whence $\diam U(X)\ge2d$ and, therefore, $\diam U(X)=\infty$, because $d$ is arbitrary.

(2) Now suppose that $\d_XT\ne\0$, then $d_H(X,\,T)>\overrightarrow{d_H}(\d_XT,\,X)$, from which the value $h=\overrightarrow{d_H}(\d_XT,\,X)$ is finite. Since $d_H(X,\,T)=\infty$, then there exists $a\in T\sm X$ for which $|aX|>h$ and, therefore, the point $a$ is internal w.r.t\. $X$. It remains to repeat the reasoning from item $(1)$.

Now let $0<d_H(X,\,T)<\infty$. We already know that $d_H(X,\,T)\ge d_{GH}(X,\,T)\ge c/2$. We will show that $d_H(X,\,T)\le c/2$. Note that
$$
d_H(X,\,T)=\sup_{a\in T\sm X}|aX|=\max\biggl\{\sup_{a\in\Int_XT}|aX|,\sup_{a\in\d_XT}|aX|\biggr\}.
$$
If $\d_XT=\0$, then $\Int_XT\ne\0$, because otherwise $d_H(X,T)=0$. Therefore
$$
d_H(X,\,T)=\sup_{a\in\Int_XT}|aX|=\overrightarrow{d_H}(\Int_XT,\,X).
$$
If $\d_XT\ne\0$, then $d_H(X,\,T)>\overrightarrow{d_H}(\d_XT,\,X)$, so $\Int_XT\ne\0$ and again
$$
d_H(X,\,T)=\overrightarrow{d_H}(\Int_XT,\,X).
$$

Since $d_H(X,\,T)=\sup_{a\in T\sm X}|aX|<\infty$, then there exists $a\in\Int_XT$, for which $\bigl||aX|-d_H(X,\,T)\bigr|\le\e$. Since $a\in\Int_XT$, then there is a path $\g\:[0,\,1]\to T$ such that $\g(0)\in X$, $\g(1)\in X$, and $\g(t)=a$ for some $t\in(0,\,1)$. By definition of ultrametrization, for any $\e>0$ there is a dotted line $x_0=\g(0),\,x_1,\,\ldots,\,x_n=\g(1)$ with vertices in $X$ such that $\max_{0\le i\le n-1}|x_ix_{i+1}|\le c+\e$. Since $\g$ is an embedding, $a$ is an interior point of the curve $\g$, and $T$ is a tree, then the points $x$ and $x'$ lie in different path-connected components of the space $T\sm\{a\}$. This means that there exists an index $i$ such that $a$ lies on the only path in $T$ connecting points $x_i$ and $x_{i+1}$. Then $|aX|\le|x_ix_{i+1}|/2\le(c+\e)/2$. Since $\bigl||aX|-d_H(X,\,T)\bigr|\le\e$, we obtain that $d_H(X,\,T)-\e\le|aX|\le(c+\e)/2$. The desired inequality follows from the arbitrariness of $\e>0$.
\end{proof}

\section{New geodesics in Gromov--Hausdorff class}
\markright{\thesection.~New geodesics in Gromov--Hausdorff class}

\begin{thm}\label{thm:HGHgeodesic}
Let $X$ be a proper geodesic metric space and $A$ a closed subset of $X$. Suppose that $d_H\bigl(A,\,X\bigr)=d_{GH}\bigl(A,\,X\bigr)<\infty$. Then the canonical Hausdorff geodesic $t\mapsto C_t$ connecting $A$ and $X$ is the shortest curve in the cloud $[X]$.
\end{thm}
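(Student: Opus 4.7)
The plan is to reduce the statement to two almost immediate facts: that the canonical Hausdorff geodesic already has the correct length when measured in the Hausdorff metric inside $X$, and that the Gromov--Hausdorff distance never exceeds the Hausdorff distance of two subsets of a common ambient space. With the hypothesis $d_H(A,X)=d_{GH}(A,X)=:d$, these two facts sandwich the length of the curve from above and below by the same value.

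First I would record that in the present setting $B=X$, so the curve from Theorem~\ref{thm:HausdorffGeodesic} simplifies to $C_t=B_t(A)\cap X=B_t(A)$, with $C_0=A$ (since $A$ is closed) and $C_d=X$ (since $d_H(A,X)=d$ forces $X\ss B_d(A)$). Each $C_t$ lies in the cloud $[X]$, because $C_t\ss X$ is closed and $d_{GH}(C_t,X)\le d_H(C_t,X)\le d-t<\infty$. From Theorem~\ref{thm:HausdorffGeodesic} we already have the Lipschitz estimate
$$
d_H(C_s,C_t)\le|s-t|\qquad\forall\,s,t\in[0,d].
$$
Since $C_s$ and $C_t$ are subsets of the common metric space $X$, the general inequality $d_{GH}\le d_H$ gives $d_{GH}(C_s,C_t)\le|s-t|$. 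In particular, the curve $t\mapsto C_t$ is continuous and $1$-Lipschitz into $([X],d_{GH})$.

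Next I would bound the length of the curve from above and below. For any partition $0=t_0<t_1<\cdots<t_k=d$, the Lipschitz estimate yields
$$
\sum_{i=1}^{k}d_{GH}(C_{t_{i-1}},C_{t_i})\le\sum_{i=1}^{k}(t_i-t_{i-1})=d,
$$
so the $d_{GH}$-length of the curve is at most $d$. Conversely, in any metric space the length of a curve is at least the distance between its endpoints, which here equals $d_{GH}(C_0,C_d)=d_{GH}(A,X)=d$ by hypothesis. Hence the length is exactly $d=d_{GH}(A,X)$, so the curve realises the distance between its endpoints in $[X]$ and is therefore a shortest geodesic there.

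I do not anticipate a genuine obstacle: the heart of the matter is just exploiting the equality $d_H(A,X)=d_{GH}(A,X)$ of the hypothesis, which is precisely the gap that normally prevents a Hausdorff-metric geodesic from being a Gromov--Hausdorff geodesic. The only small point worth checking carefully is that $C_0=A$ and $C_d=X$ as sets (using the closedness of $A$ and the equality $d_H(A,X)=d$), so that one may identify the endpoints of the curve with $A$ and $X$ rather than merely spaces isometric or at zero distance to them.
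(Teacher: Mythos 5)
Your proof is correct and follows essentially the same route as the paper: both arguments squeeze the $d_{GH}$-length of $t\mapsto C_t$ between the endpoint distance $d_{GH}(A,X)$ from below and the telescoping Hausdorff increments (via $d_{GH}\le d_H$) from above, then invoke the hypothesis $d_H(A,X)=d_{GH}(A,X)$ to close the sandwich. Your additional checks that $C_0=A$, $C_d=X$, and $C_t\in[X]$ are sensible housekeeping that the paper leaves implicit.
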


\begin{proof}
Let $t$ run through the segment $[0,g]$, $C_0=A$, $C_g=X$. We choose arbitrary $0\le t_1<t_2\ldots<t_n\le g$. By Theorem~\ref{thm:HausdorffGeodesic} we have $\displaystyle d_H(A,X)=\sum_{i = 1}^{n-1}d_H(A_{t_i},\,A_{t_{i+1}})$, therefore
$$
d_{GH}(A, X)\le\sum_{i=1}^{n-1}d_{GH}(A_{t_i},\,A_{t_{i+1}})\le \sum_{i=1}^{n-1}d_{H}(A_{t_i},\,A_{t_{i+1}})=d_H(A,\,X)=d_{GH}(A,X),
$$
hence $\displaystyle d_{GH}(A,\,X)=\sum_{i=1}^{n-1}d_{GH}(A_{t_i},\,A_{t_{i+1}})$. It follows that the length of the curve
$t\mapsto C_t$ in the cloud $[X]$ is equal to the distance between its ends, i.e., this curve is a shortest geodesic in the cloud.
\end{proof}

\begin{thm}\label{thm:HausdEqGromovHausdR}
Let $X$ be a subset of the real line $\R$, then $d_{GH}(X,\R)=d_H(X,\R)$.
\end{thm}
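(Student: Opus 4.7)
The plan is to reduce the statement to Theorem~\ref{thm:infinitemetrictrees} applied to $\R$ itself, viewed as a (degenerate) metric tree --- for instance, obtained by gluing consecutive endpoints of the family $\{[n,\,n+1]\}_{n\in\Z}$. With this identification $T=\R$ is a metric tree in the sense of the paper, and the question reduces to verifying the hypotheses of Theorem~\ref{thm:infinitemetrictrees}.

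First I would dispatch the trivial case $d_H(X,\,\R)=\infty$: Theorem~\ref{thm:SUbsetRnFiniteGH} applied with $n=1$ yields $d_{GH}(X,\,\R)=\infty$ as well, so the equality is automatic. Otherwise $d_H(X,\,\R)<\infty$, which means $X$ is an $\e$-network in $\R$ for some finite $\e$; in particular $X$ is unbounded both from above and from below. Under this hypothesis I claim that $\d_XT=\0$: for any $a\in\R\sm X$ the unboundedness just noted produces $x,\,x'\in X$ with $x<a<x'$, and the inclusion $[x,\,x']\hookrightarrow\R$ is a topological embedding of a segment with endpoints in $X$ passing through $a$, so $a$ is internal with respect to $X$.

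Having verified that $\d_XT=\0$, it then suffices to invoke Theorem~\ref{thm:infinitemetrictrees} in the $\d_XT=\0$ branch to conclude $d_{GH}(X,\,\R)=d_H(X,\,\R)$. I do not expect any serious obstacle here: the whole content is the observation that the potentially awkward boundary hypothesis of Theorem~\ref{thm:infinitemetrictrees} is automatically vacuous for subsets of $\R$ lying at finite Hausdorff distance from the whole line, after which everything follows from the general infinite-tree result proved above.
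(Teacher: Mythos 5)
Your proposal is correct and follows essentially the same route as the paper's own proof: dispatch the infinite case via Theorem~\ref{thm:SUbsetRnFiniteGHкратко} --- rather, Theorem~\ref{thm:SUbsetRnFiniteGH} with $n=1$, view $\R$ as an infinite metric tree with vertex set $\Z$, observe that finiteness of $d_H(X,\,\R)$ forces $\d_XT=\0$, and invoke Theorem~\ref{thm:infinitemetrictrees}. Your verification that every $a\in\R\sm X$ is internal is slightly more explicit than the paper's, but the argument is the same.
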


\begin{proof}
By Theorem~\ref{thm:SUbsetRnFiniteGH} the set $X$ is at a finite Gromov--Hausdorff distance from $\R$ if and only if $d_H(X,\R)<\infty$. Thus, if one of the quantities $d_{GH}(X,\R)$ or $d_H(X,\R)$ is infinite, then the second one is too. In other words, in the case for infinite distances the statement of the theorem is true.

Now let both of these quantities be finite. Note that $\R$ can be considered as an infinite metric tree $T$, for which the set of vertices coincides with $\Z$. Since $d_H(X,\,\R)<\infty$, then each point $a\in T\sm X$ is internal w.r.t\. $X$, i.e., $\d_XT=\0$. Then the equality $d_{GH}(X,\,\R)=d_H(X,\,\R)$ follows from Theorem~\ref{thm:infinitemetrictrees}.
\end{proof}

\begin{cor}\label{cor:Rgeodesic}
Let $X\ss\R$ be a non-empty closed subset such that $d_{GH}(X,\R)<\infty$. Then the canonical Hausdorff geodesic connecting $X$ and $\R$ is a shortest curve in the cloud $[\R]$.
\end{cor}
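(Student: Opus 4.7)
The plan is to deduce this corollary by directly applying Theorem~\ref{thm:HGHgeodesic} with the ambient space being $\R$ and the closed subset being $X$. All that is needed is to verify the three hypotheses of that theorem, and each of them is either elementary or has already been established earlier in the paper.

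First I would check that $\R$ is a proper geodesic metric space: every closed bounded subset of $\R$ is compact (Heine--Borel), and any two real numbers are joined by the standard affine segment, which realises the distance. Second, $X$ is a non-empty closed subset of $\R$ by hypothesis, so the set-theoretic assumptions of Theorem~\ref{thm:HGHgeodesic} are met. Third, the critical numerical hypothesis
$$
d_H(X,\,\R)=d_{GH}(X,\,\R)<\infty
$$
is obtained by combining the standing assumption $d_{GH}(X,\,\R)<\infty$ with Theorem~\ref{thm:HausdEqGromovHausdR}, which guarantees the equality of the Hausdorff and Gromov--Hausdorff distances between $\R$ and any of its subsets.

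Once these hypotheses are in place, Theorem~\ref{thm:HGHgeodesic} can be invoked verbatim: the canonical Hausdorff geodesic $t \mapsto C_t$ connecting $X$ and $\R$, where $C_t = B_t(X)\cap B_{g-t}(\R)$ for $g = d_H(X,\,\R)$, is a shortest curve in the cloud $[\R]$. There is no real obstacle in this argument --- the corollary is a packaging of two earlier theorems, and the only small point to mention is that $\R$ and $X$ belong to the same cloud precisely because $d_{GH}(X,\,\R)<\infty$, so speaking about a shortest curve in $[\R]$ connecting these two spaces makes sense.
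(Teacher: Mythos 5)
Your proposal is correct and follows exactly the paper's route: the corollary is obtained by combining Theorem~\ref{thm:HausdEqGromovHausdR} (which upgrades $d_{GH}(X,\R)<\infty$ to $d_H(X,\R)=d_{GH}(X,\R)<\infty$) with Theorem~\ref{thm:HGHgeodesic}, after noting that $\R$ is proper and geodesic. Your write-up merely makes explicit the hypothesis-checking that the paper leaves implicit.
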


\begin{proof}
It follows from Theorem~\ref{thm:HausdEqGromovHausdR} and Theorem~\ref{thm:HGHgeodesic}.
\end{proof}

\begin{cor}\label{cor:infinitetreecloud}
Let $T$ be a metric tree whose vertex degrees are finite, $X$ be a non-empty closed subset of $T$ such that either $\d_XT=\0$, or $\d_XT\ne\0$ and $d_H(X,\,T)>\overrightarrow{d_H}(\d_XT,\,X)$. Then the canonical Hausdorff geodesic connecting $X$ to $T$ is a shortest curve in the cloud $[T]$.
\end{cor}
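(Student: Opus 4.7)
The plan is to obtain this corollary as a direct composition of Theorem~\ref{thm:infinitemetrictrees} and Theorem~\ref{thm:HGHgeodesic}: the first supplies the numerical coincidence $d_{GH}(X,T)=d_H(X,T)$, and the second converts that coincidence into the statement that the canonical Hausdorff geodesic is shortest in the cloud. Essentially no new analytic work is needed beyond a check of the hypotheses.

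First, I would verify that $T$ is a proper geodesic metric space. The geodesicity is immediate from the construction of the intrinsic metric on $T$ out of the segments $e_\a$. Properness is where the ``finite vertex degrees'' assumption enters: since every vertex has only finitely many incident edges, each closed bounded subset of $T$ meets only finitely many edges in a compact piece, so closed balls in $T$ are compact. (Implicit in the statement of the corollary is also that $d_H(X,T)<\infty$, since otherwise the canonical Hausdorff geodesic of Theorem~\ref{thm:HausdorffGeodesic} is not even defined; this finiteness will be inherited from the hypotheses once Theorem~\ref{thm:infinitemetrictrees} is applied.)

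Second, I would invoke Theorem~\ref{thm:infinitemetrictrees} with the pair $(X,T)$. The dichotomy on $\d_XT$ assumed in the corollary matches verbatim the hypothesis of that theorem, and $X$ is non-empty by assumption; hence the theorem yields $d_{GH}(X,T)=d_H(X,T)$.

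Finally, I would apply Theorem~\ref{thm:HGHgeodesic} with the substitution ``$X\mapsto T$'' in the role of the proper geodesic ambient space, and ``$A\mapsto X$'' in the role of the closed subset. The equality produced in the previous step is exactly the hypothesis $d_H(A,X)=d_{GH}(A,X)<\infty$ of Theorem~\ref{thm:HGHgeodesic}, so the canonical Hausdorff geodesic $t\mapsto C_t=B_t(X)\cap B_{d-t}(T)$ is a shortest curve in $[T]$, which is the assertion of the corollary. The only place where any real care is required is the properness check on $T$; everything else is bookkeeping between the two cited theorems.
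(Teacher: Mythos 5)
Your proposal is correct and follows exactly the paper's own argument: finite vertex degrees give properness of the (already geodesic) tree $T$, Theorem~\ref{thm:infinitemetrictrees} gives $d_{GH}(X,T)=d_H(X,T)$, and Theorem~\ref{thm:HGHgeodesic} then yields that the canonical Hausdorff geodesic is shortest in $[T]$. Your extra remark about the implicit finiteness of $d_H(X,T)$ is a reasonable bookkeeping point, but the route is the same as the paper's.
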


\begin{proof}
Since the degree of each vertex $T$ is finite, then $T$ is proper. Since $T$ is also geodesic, the statement follows from Theorem~\ref{thm:infinitemetrictrees} and Theorem~\ref{thm:HGHgeodesic}.
\end{proof}

Denote by $\R^n_\infty$ the space $\R^n$ endowed with $\ell_\infty$-norm. Let $\Z^n_\infty$ be the integer lattice in $\R^n_\infty$.

\begin{thm}\label{thm:RninftyZninfty}
It holds $d_H(\R^n_\infty,\,\Z^n_\infty)=d_{GH}(\R^n_\infty,\,\Z^n_\infty)$.
\end{thm}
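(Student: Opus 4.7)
First I would compute the Hausdorff distance. For any $x=(x_1,\ldots,x_n)\in\R^n$, rounding each coordinate to the nearest integer produces a lattice point within $\ell_\infty$-distance $\max_i\dist(x_i,\Z)\le 1/2$, with equality at $x=(1/2,\ldots,1/2)$. Hence $d_H(\R^n_\infty,\Z^n_\infty)=1/2$. Since $d_{GH}\le d_H$ always holds, it remains only to prove $d_{GH}(\R^n_\infty,\Z^n_\infty)\ge 1/2$.

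The approach is to apply the ultrametrization lower bound (Theorem~\ref{thm:UltraDownEst}) in the same spirit as the proof of Theorem~\ref{thm:infinitemetrictrees}, even though $\R^n_\infty$ is not a metric tree when $n\ge 2$. Since $\R^n_\infty$ is path-connected, Remark~\ref{rk:pathconnectedtodelta1} gives $U(\R^n_\infty)=\D_1$. The key step is to compute $U(\Z^n_\infty)$. I would show that $|xy|_u=1$ for all distinct $x,y\in\Z^n$. For the upper bound, one connects $x$ and $y$ by a dotted line of single-coordinate unit steps; each consecutive pair has $\ell_\infty$-distance exactly $1$, so the ultrametric length of this dotted line is $1$. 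For the lower bound, any two distinct lattice points are at $\ell_\infty$-distance at least $1$, so every dotted line joining them has ultrametric length at least $1$. Hence $\diam U(\Z^n_\infty)=1$.

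Combining this with the identity $2d_{GH}(\D_1,Y)=\diam Y$, Theorem~\ref{thm:UltraDownEst} then yields
$$
d_{GH}(\R^n_\infty,\Z^n_\infty)\ge d_{GH}\bigl(U(\R^n_\infty),U(\Z^n_\infty)\bigr)=d_{GH}\bigl(\D_1,U(\Z^n_\infty)\bigr)=\tfrac12\diam U(\Z^n_\infty)=\tfrac12,
$$
matching the Hausdorff bound. I do not expect a real technical obstacle; the essential content of the statement is the coincidence of scales, namely that the diameter of the ultrametrization of the integer lattice ($=1$) is exactly twice the Hausdorff distance from the lattice to the ambient space ($=1/2$). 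In other $\ell_p$-norms with $n\ge 2$ the same ultrametrization computation still gives $\diam U(\Z^n)=1$, but $d_H$ strictly exceeds $1/2$, so the method would then produce only a non-tight lower bound. The special geometry of the sup norm is what makes this ultrametrization estimate tight.
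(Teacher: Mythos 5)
Your proof is correct and follows essentially the same route as the paper: both establish $d_H=\tfrac12$ by rounding, apply Theorem~\ref{thm:UltraDownEst} with $U(\R^n_\infty)=\D_1$, and identify $U(\Z^n_\infty)$ as a countable simplex of diameter $1$ (which you verify explicitly via single-coordinate unit steps) to get the matching lower bound $\tfrac12\diam U(\Z^n_\infty)=\tfrac12$. No gaps; your closing remark about why the sup norm is essential for tightness is a nice addition but not part of the paper's argument.
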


\begin{proof}
Since $d_{GH}(\R^n_\infty,\,\Z^n_\infty)\le d_H(\R^n_\infty,\,\Z^n_\infty)=\frac12$, it suffices to prove the opposite inequality.

By Theorem \ref{thm:UltraDownEst} we have $d_{GH}(\R^n_\infty,\,\Z^n_\infty)\ge d_{GH}\bigl(U(\R^n_\infty),\,U(\Z^n_\infty)\bigr)$. Note that $U(\R^n_\infty)=\D_1$, and $U(\Z^n_\infty)$ is a countable simplex of diameter $1$. Hence, $d_{GH}\bigl(U(\R^n_\infty),\,U(\Z^n_\infty)\bigr)=\frac12$, which completes the proof.
\end{proof}

\begin{cor}
Canonical Hausdorff geodesic connecting $\Z^n_\infty$ and $\R^n_\infty$ is a shortest path in the cloud $[\R^n_\infty]$.
\end{cor}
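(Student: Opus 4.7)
The plan is to apply Theorem~\ref{thm:HGHgeodesic} directly, with $X = \R^n_\infty$ and $A = \Z^n_\infty$, using Theorem~\ref{thm:RninftyZninfty} to supply the crucial hypothesis that the Hausdorff and Gromov--Hausdorff distances between these two spaces coincide and are finite. There is essentially no new content beyond combining the two preceding results; the proof is just a verification that all hypotheses of Theorem~\ref{thm:HGHgeodesic} are met.

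First I would check that $\R^n_\infty$ is a proper geodesic metric space. Properness is clear because closed balls in the $\ell_\infty$-norm are closed cubes in $\R^n$, hence compact. The space is geodesic because, like every normed space, any two points are joined by the straight-line segment, which is a shortest curve realizing the distance. Next I would note that $\Z^n_\infty$ is a closed (indeed discrete) subset of $\R^n_\infty$.

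Then I would invoke Theorem~\ref{thm:RninftyZninfty}, which asserts the equality $d_H(\R^n_\infty,\,\Z^n_\infty)=d_{GH}(\R^n_\infty,\,\Z^n_\infty)$; together with the standard observation that the Hausdorff distance equals $1/2$ (the points of $\R^n_\infty$ farthest from $\Z^n_\infty$ are the centers of unit cubes), this shows both quantities are finite. With all hypotheses of Theorem~\ref{thm:HGHgeodesic} verified, the conclusion of that theorem applied to the pair $(\Z^n_\infty,\R^n_\infty)$ yields exactly the claim: the canonical Hausdorff geodesic $t\mapsto C_t$ joining $\Z^n_\infty$ and $\R^n_\infty$ is a shortest curve in the cloud $[\R^n_\infty]$.

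No step is really an obstacle here; the genuine work was done in Theorem~\ref{thm:RninftyZninfty} (via the ultrametrization lower bound, where $U(\R^n_\infty)=\D_1$ and $U(\Z^n_\infty)$ is identified with a countable simplex of diameter~$1$) and in Theorem~\ref{thm:HGHgeodesic} (which turns a Hausdorff-realized Gromov--Hausdorff distance into a geodesic statement via the length computation along the canonical curve). The corollary is a one-line deduction from these.
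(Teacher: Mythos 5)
Your proof is correct and follows exactly the paper's route: the corollary is deduced by combining Theorem~\ref{thm:RninftyZninfty} with Theorem~\ref{thm:HGHgeodesic}, and your extra verification that $\R^n_\infty$ is proper and geodesic and that $\Z^n_\infty$ is closed just makes explicit what the paper leaves implicit.
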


\begin{proof}
It follows from Theorem~\ref{thm:RninftyZninfty} and Theorem~\ref{thm:HGHgeodesic}.
\end{proof}

\markright{References}
\renewcommand\refname{References}

\end{document}